\documentclass{amsart}
\usepackage{graphicx}
\vfuzz2pt 
\hfuzz2pt 
\newtheorem{thm}{Theorem}[section]
\newtheorem{cor}[thm]{Corollary}
\newtheorem{lem}[thm]{Lemma}

\theoremstyle{definition}
\newtheorem{defn}[thm]{Definition}
\theoremstyle{remark}
\newtheorem{rem}[thm]{Remark}
\numberwithin{equation}{section}

\begin{document}

\title[On Moore-Yamasaki-Kharazishvili type measures and  infinite powers $\cdots$]{On Moore-Yamasaki-Kharazishvili type measures and  the infinite powers of Borel diffused probability measures on ${\bf R}$
}
\author{ M.Kintsurashvili, T.Kiria and  G.Pantsulaia }%
\address{Department of Mathematics, Georgian Technical University,\\
Kostava Street-77,  Tbilisi 0175, Georgia}%
\email{  m.kintsurashvili@gtu.ge, t.kiria@gtu.ge, g.pantsulaia@gtu.ge }%
\thanks{The third author partially is
supported by Shota Rustaveli National Science Foundation's Grant
no 31/25.}%
\subjclass{60B15 , 28C15 }%
\keywords{Moore-Yamasaki-Kharazishvili type measure, product measure, equidistributed sequence of real numbers}%

\begin{abstract}The paper contains a brief description of Yamasaki's remarkable investigation (1980) of the relationship  between Moore-Yamasaki-Kharazishvili type measures and  infinite powers of Borel diffused probability measures on ${\bf R}$. More precisely, we give   Yamasaki's proof that no infinite power of the Borel probability measure with a strictly positive density function on $R$ has an equivalent Moore-Yamasaki-Kharazishvili type measure. A certain modification of Yamasaki's example is used for the construction of such a Moore-Yamasaki-Kharazishvili type measure  that  is equivalent to the  product of a certain infinite family of Borel probability measures with a strictly positive density function on $R$.  By virtue of the properties of  equidistributed sequences on the real axis,  it is demonstrated that an arbitrary  family of infinite powers of Borel diffused probability measures with strictly positive density functions on $R$ is strongly separated and, accordingly, has an infinite-sample well-founded estimator of the unknown distribution function. This extends the main result  established  in [\textit{ Zerakidze Z., Pantsulaia G.,  Saatashvili G.}  On the separation
problem for a family of Borel and Baire $G$-powers of
shift-measures on $\mathbb{R}$ // Ukrainian Math. J. -2013.- {\textbf 65 (4)}.- P. 470--485 ].
\end{abstract}
\maketitle
\section{Introduction}

Let $\mu$  and $\nu$ be non-trivial $\sigma$-finite  measures on a measurable space $(X,M)$. The measures $\mu$  and $\nu$ are called orthogonal if there is a measurable set
 $E \in M$ such that $\mu(E)=0$ and $\nu(X \setminus E)=0$.  The measures $\mu$  and $\nu$ are called equivalent if and only if the following condition
 $$
 (\forall E)(E \in M \rightarrow (\mu(E)=0 \Longleftrightarrow \nu(E)=0))
 $$
 is satisfied.

 It is well known that the following  facts hold true in an $n$-dimensional Euclidean vector space ${\bf R}^n~(n \in N)$:

{\bf Fact 1.1}~{\it  Let $\mu$ be a probability Borel measure on ${\bf R}$ with a strictly positive
continuous distribution function  and  $\lambda_n$ be a Lebesgue measure defined on the
$n$-dimensional topological vector space ${\bf R}^n$. Then the measures $\mu^n$ and $\lambda_n$ are equivalent.}

\medskip

{\bf Fact 1.2}~{\it Let $(\mu_k)_{1 \le k \le n}$ be a family of  Borel probability measures on ${\bf R}$ with strictly positive
continuous distribution functions  and $\lambda_n$ be a Lebesgue measure defined on the
$n$-dimensional topological vector space $\mathbf{R}^n$. Then the measures $\prod_{k=1}^n\mu_k$ and $\lambda_n$ are equivalent.}

\medskip

{\bf Fact 1.3}~{\it  Let $\mu_1$  and $\mu_2$ be Borel probability measures on ${\bf R}$ with strictly positive
continuous distribution functions. Then the measures $\mu_1^n$ and $\mu_2^n$ are equivalent.}

\medskip

{\bf Fact 1.4}~{\it Let $(\mu_k)_{1 \le k \le n}$ be a family of  Borel probability measures on ${\bf R}$ with strictly positive
continuous distribution functions. Then the measures $\mu_k^n$ and $\mu_l^n$ are equivalent for each $1 \le k \le l \le n$.}
\medskip

The proof of the above mentioned facts employs  the  following simple lemma which is well known  in the literature.

{\bf Lemma 1.1}~{\it  Let $\mu_k$  and $\nu_k$ be equivalent non-trivial $\sigma$-finite Borel measures on the measurable space $(X_k,M_k)$  for $1 \le k \le n$. Then
the measures $\prod_{k=1}^n\mu_k$ and $\prod_{k=1}^n\nu_k$  are equivalent.}

\begin{proof} Note that for proving  Lemma 1.1, it  suffices  to prove that  if $\mu_k$  is absolutely continuous with respect to $\nu_k(k=1,2)$, then
  so is $\prod_{k=1}^2\mu_k$   with respect to $\prod_{k=1}^2\nu_k$.

Assume that $E \in M_1 \times M_2$ such that $\mu_1\times \mu_2(E)=0$. We have to show that $\nu_1\times \nu_2(E)=0$.

By the Fubini theorem we have
$$0=\mu_1\times \mu_2(E)=\int_{X_1}\mu_2(E \cap (\{x\} \times X_2))d\mu_1(x).$$
This means that
$$
\mu_{1}(\{ x : \mu_2(E \cap (\{x\} \times X_2))>0 \})=0
$$
or, equivalently,
$$
\mu_{1}( X_1  \setminus  \{ x : \mu_2(E \cap (\{x\} \times X_2))=0 \})=0.
$$
Since $\nu_1 \ll \mu_1,$ we have
$$
 \{ x : \mu_2(E \cap (\{x\} \times X_2))=0 \} \subseteq \{ x : \nu_2(E \cap (\{x\} \times X_2))=0 \}.
$$
Since $\nu_1 \ll \mu_1$ and
$$
\mu_{1}( X_1  \setminus  \{ x : \nu_2(E \cap (\{x\} \times X_2))=0\})=0,
$$
we have
$$
\nu_{1}( X_1  \setminus  \{ x : \nu_2(E \cap (\{x\} \times X_2))=0\})=\nu_{1}(\{ x : \nu_2(E\cap (\{x\} \times X_2))>0\})=0.
$$
Finally, we get
$$\nu_1\times \nu_2(E)=\int_{X_1}\nu_2(E \cap (\{x\} \times X_2))d\nu_1(x)=
$$
$$
\int_{\{ x : \nu_2(E \cap (\{x\} \times X_2))>0\} }\nu_2(E \cap (\{x\} \times X_2))d\nu_1(x)+
$$
$$
\int_{\{ x : \nu_2(E \cap (\{x\} \times X_2))=0\}}\nu_2(E \cap (\{x\} \times X_2))d\nu_1(x)=0.
$$
\end{proof}

In order to obtain the infinite-dimensional versions of Facts 1.1-1.2, we must  know  what measures in infinite-dimensional topological vector
spaces can be taken as  partial analogs of the Lebesgue measure in ${\bf R}^n~(n \in N)$. In this direction  the results of I. Girsanov and B. Mityagyn \cite{Gir59} and Sudakov \cite{Sud59}
on the nonexistence of nontrivial translation-invariant
$\sigma$-finite Borel measures in infinite-dimensional topological
vector spaces are important. These authors assert that the
properties of $\sigma$-finiteness and of translation-invariance
are not consistent. Hence one can weaken the property of
translation-invariance for analogs of the Lebesgue measure and
construct nontrivial $\sigma$-finite Borel measures
which are invariant under everywhere dense linear manifolds. We wish to make a special note that Moore \cite{Moore65}, Yamasaki \cite{Yam80} and Kharazishvili \cite{Khar84} give the constructions of such measures
 in an infinite-dimensional Polish topological vector space $\mathbb{R^N}$
of all real-valued sequences equipped with product topology,  which are invariant under the group $R^{(N)}$ of all eventually zero real-valued
sequences. Such measures can be called Moore-Yamasaki-Kharazishvili  type measures in $\mathbb{R^N}$.  Using Kharazishvili's approach \cite{Khar84},
it is  proved in \cite{GKPP14} that every infinite-dimensional Polish linear space admits a $\sigma$-finite
non-trivial Borel measure that is translation invariant with respect to a dense
linear subspace. This extends a recent result of Gill, Pantsulaia and Zachary \cite{GPZ12}
on the existence of such measures in Banach spaces with Schauder bases.

In this paper, we focus on the question whether Facts 1.1-1.2  admit infinite-dimensional generalizations in terms of Moore-Yamasaki-Kharazishvili  type measures in $\mathbb{R^N}$. To this end, our consideration will involve the following problems.

\medskip

{\bf Problem 1.1}~{\it  Let $\mu$ be  a probability Borel measure on $R$ with a strictly positive
continuous distribution function  and  $\lambda$ be a Moore-Yamasaki-Kharazishvili  type measure in $\mathbb{R^N}$. Are the measures $\mu^N$ and $\lambda$ equivalent ?}

\medskip

{\bf Problem 1.2}~{\it Let $(\mu_k)_{k \in N}$ be a family of  Borel probability measures on $R$ with strictly positive
continuous distribution functions  and  $\lambda$ be a Moore-Yamasaki-Kharazishvili  type measure in $\mathbb{R^N}$. Are the measures $\prod_{k \in N}\mu_k$ and $\lambda$  equivalent?}

\medskip

Concerning Facts 1.3-1.4, it is natural to consider the following problems.

\medskip

{\bf Problem 1.3}~{\it Let $\mu_1$ and $\mu_2$ be Borel probability measures on $R$ with strictly positive
continuous distribution functions.  Are the measures $\mu_1^N$ and $\mu_2^N$ equivalent?}

\medskip

{\bf Problem 1.4}~{\it  Let $(\mu_i)_{i \in I} $ be a family of all Borel probability measures on $R$ with strictly positive
continuous distribution functions. Setting $S(R^N):=\cap_{i \in I}\mbox{dom}(\overline{\mu_i^N}),$ where $\overline{\mu_i^N}$ denotes a usual completion of the measure $\mu_i^N(i \in I)$, does there exist a partition $(D_i)_{i \in I}$ of $R^N$ into elements of the $\sigma$-algebra $S(R^N)$ such that $\overline{\mu_i^N}(D_i)=1$ for each $i \in I$?}

\medskip

Problems 1.3-1.4 are not new and  have
been investigated by many authors in more general formulations. In this direction, we should specially  mention
the result of S. Kakutani \cite{Kak48}(see
Theorem 4.3) stating that if one has equivalent probability measures
$\mu_i$ and $\nu_i$ on the $\sigma$-algebra $\mathcal{L}_i$ of
subsets of a set $\Omega_i, i =1,2,\cdots$ and if $\mu$ and $\nu$
denote respectively the infinite product measures $\prod_{i \in
N}{\mu}_i$ and $\prod_{i \in N}{\nu}_i$ on the infinite product
$\sigma$-algebra  generated on the infinite product set $\Omega$,
then $\mu$ and $\nu$ are either equivalent or orthogonal. Similar dichotomies have revealed themselves in
the study of Gaussian stochastic processes. C. Cameron and
W.E. Martin \cite{CamMar44} that if one considers the measures
induced on a path space by a Wiener process on the unit interval,
then, if the variances of corresponding processes are different,  the measures
are orthogonal. Results of this kind were generalized by many
authors(cf. \cite{Feld58}, \cite{Gren50} and others). A.M. Vershik \cite{Ver66}
proved that a group of all admissible translations(in the sense of
quasiinvariance) of an arbitrary Gaussian measure in an
infinite-dimensional separable Hilbert space is a linear manifold.

For study of the general problem of equivalence and
singularity  of two product measures was carried out by  various authors using different approaches, among which are the strong law of large numbers,  the properties of the Hellinger integral \cite{Hill71}, the zero-one laws  \cite{Lep72} and so on. In this paper,  we propose  a new approach for the solution of  Problems 1.3-1.4,   which uses the properties of uniformly distributed sequences \cite{KuNi74}.

In Sections 2-3,  we give solutions of Problems 1.1-1.2 which are due to  Yamasaki \cite{Yam80}. In Section 4,  we give solutions of Problems 1.3-1.4.

\section{Negative solution of the Problem 1.1}

A negative solution of  Problem 1.1  is contained in the following

\medskip

{\bf Fact 2.1}(\cite{Yam80}, Proposition 2.1, p. 696){\it  Let $f(x)$ be a measurable function on $R^1$ which satisfies
$f(x)>0$  and $\int_{-\infty}^{+\infty} f(x)dx = 1$. Let $\mu$ be the stationary product measure of $f$ (i.e.
$d\mu= \prod_{i=1}^{\infty}f(x_i)dx_i$) and ${\bf R^{(N)}}$ be a linear vector space of all eventually zero real-valued sequences. Then $\mu$ is ${\bf R^{(N)}}$-quasi-invariant but $\mu$  has no equivalent Moore-Yamasaki-Kharazishvili type measure.}

\begin{proof} As proved in \cite{Shim75}, the stationary product measure
$\mu$ is ${\bf R^{(N)}}$-ergodic. Let $\sum$ be the permutation group on the set of all natural numbers
$N = \{1, 2,...\}.$  $\sum$ can be regarded as a transformation group on ${\bf R^N}$, and $\mu$
is $\sum$ -invariant. Let $\sum_0$ be the subgroup of $\sum$ generated by all transpositions (of
two elements of $N$).  $\sum_0$ consists of such a permutation $\sigma \in \sum$ that satisfies $\sigma(i) = i$
except finite numbers of $i \in N$. As shown in \cite{Shim75}, the measure $\mu$ is $\sum_0$-ergodic.

Now, we shall derive a contradiction assuming that $\mu$ has an equivalent
${\bf R^{(N)}}$-invariant $\sigma$-finite measure $\nu$. Since $\mu \approx \nu$, where  $\mu$  is $\sum_0$-invariant and $\sum_0$-
ergodic, and $\nu$ is  $\sum_0$-invariant, then we have $\mu=c \nu$ for some constant $c>0$. Thus,
the ${\bf R^{(N)}}$-invariance of $\nu$ implies that of $\mu$, which is a contradiction.

Therefore  it suffices  to prove that $\nu$ is $\sum_0$-invariant, namely for each $\sigma \in \sum_0$, $\tau_{\sigma}\nu=\nu$, where
$$
\tau_{\sigma}\nu(B)= \nu(\sigma^{-1}(B)),  \eqno (2.1)
$$
for each $B \in \mathbb{B}(R^{N})$.
Since $\tau_{\sigma}\mu=\mu$, we have $\tau_{\sigma}\nu \approx \nu$. On the other hand, $\nu$ is ${\bf R^{(N)}}$-ergodic because $\mu$
is such. Therefore if $\tau_{\sigma}\nu$ is ${\bf R^{(N)}}$-invariant, then we have $\tau_{\sigma}\nu =c_{\sigma}\nu$ for some constant
$c_{\sigma}>0$. In particular for a transposition $\sigma$, ${\sigma}^2=l$ implies $c_{\sigma}^2=l$, hence $c_{\sigma}=1$.
This means that $\nu$ is invariant under any transposition. Since $\sum_0$  is generated
by the set of all transpositions, we have proved the $\sum_0$-invariance of $\nu$.

To complete the proof of Fact 2.1, it remains only to prove that $\tau_{\sigma}\nu$ is
${\bf R^{(N)}}$-invariant. Since $\nu$  is ${\bf R^{(N)}}$ -invariant, we have $\tau_{x}\nu=\nu$ for any $x \in {\bf R^{(N)}}$. Therefore
$$
(\forall x)(x \in {\bf R^{(N)}} \rightarrow \tau_{\sigma}\tau_{x}\nu=\tau_{\sigma}\nu). \eqno (2.2)
$$
However, we can easily show $\tau_{\sigma}\tau_{x}\nu=\tau_{\sigma x}\tau_{\sigma}\nu,$ so (2.2) implies that $\tau_{\sigma}$ is $\sigma({\bf R^{(N)}})$-invariant.
Since $\sigma$  maps ${\bf R^{(N)}}$ onto ${\bf R^{(N)}}$, namely $\sigma({\bf R^{(N)}})= {\bf R^{(N)}}$
we have proved the
${\bf R^{(N)}}$-invariance of $\tau_{\sigma}\nu$.

\end{proof}

\section{Particular  solution of  Problem 1.2}

{\bf Remark 3.1.} If in the formulation of Problem 1.2 we have that  $\mu_k=\mu_n$ for each $k,n \in N$, then,  following Fact 2.1, the answer to the question posed in Problem 2.1 is no.

\medskip

{\bf Example 3.1.}(\cite{Pan02}, Section 1, p. 354 ). Let $ {\bf R}^N $ be the topological vector  space of all real-valued
sequences equipped with the Tychonoff topology. Let us denote by $B({\bf R}^N) $ the $\sigma$-algebra of all Borel subsets in $ {\bf
R}^N $.

  Let $ (a_i)_{i \in N} $ and $ (b_i)_{i \in N} $ be sequences of real numbers
such that
$$
( \forall i )( i \in N \rightarrow a_i < b_i ).
$$

We put
$$
 A_n={\bf R}_0 \times \cdots \times {\bf R}_n \times (\prod \limits_{i > n}\Delta_i)~,
$$
for $n \in N$, where
$$
(\forall i)( i \in N \rightarrow {\bf R}_i={\bf R}~ \& ~
\Delta_i=[a_i;b_i[).
$$
We put also
$$
\Delta=\prod_{i \in N}\Delta_i.
$$

   For an arbitrary natural number $i \in N$, consider the Lebesgue measure
$ \mu_i $  defined on the space ${\bf R}_i$ and satisfying the
condition $\mu_i(\Delta_i)=1$. Let us denote by $\lambda_i$ the
normed Lebesgue measure defined on the interval $\Delta_i$.

   For an  arbitrary $n \in N$, let us denote by $\nu_n$ the measure  defined by
$$
\nu_n= \prod \limits_{1 \le i \le n} \mu_i \times \prod\limits_{i
> n} \lambda_i,
$$
and by ${\overline{\nu}}_n$ the Borel measure in the space ${\bf
R}^N$  defined by
$$
 ( \forall X)(X \in B({\bf R}^N)  \rightarrow  {\overline{\nu}}_n(X)=
\nu_n(X \cap A_n)).
$$

   Note that (see  \cite{Pan02}, Lemma 1.1, p. 354) for an arbitrary Borel set $X \subseteq {\bf
R}^N $ there exists a limit
$$
{\nu}_{\Delta}(X)= \lim \limits_{n \rightarrow \infty}
\overline{\nu}_n(X).
$$
Moreover, the functional ${\nu}_{\Delta}$ is a nontrivial
$\sigma$-finite measure  defined on the Borel $\sigma$-algebra
$B({\bf R}^N)$.

\medskip

Recall that an element $h \in {\bf R}^N$ is called an admissible
translation in the sense of invariance  for the measure
${\nu}_{\Delta}$ if
$$
(\forall X)(X \in B({\bf R}^N) \rightarrow {\nu}_{\Delta}(X+h)=
{\nu}_{\Delta}(X)).
$$

We define
$$
G_{\Delta}=\{ h: h \in {\bf R}^N ~  \& ~ h~ \mbox{is an admissible
translation for} ~{\nu}_{\Delta} \}.
$$

  It is easy to show that $G_{\Delta}$ is a vector subspace of ${\bf R}^N$.

We have the following

\begin{lem} (\cite{Pan02},Theorem 1.4, p.356) The following conditions are equivalent:
$$
1) ~~ g=(g_1, g_2,\cdots) \in G_{\Delta},
$$
$$
2) ~(\exists n_g)(n_g \in N  \rightarrow ~\mbox{the series} \sum_{i \ge n_g}\ln( 1- \frac{|g_i|}{b_i-a_i})~
\mbox{is convergent}).
$$
\end{lem}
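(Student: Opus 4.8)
The plan is to prove the equivalence by establishing the two implications separately, treating $(1)\Rightarrow(2)$ first, by contraposition, using the single Borel test set $\Delta=\prod_i\Delta_i$. From the defining limit $\nu_\Delta=\lim_n\overline{\nu}_n$, together with $\mu_i(\Delta_i)=\lambda_i(\Delta_i)=1$ and $\mu_i(\Delta_i+g_i)=|\Delta_i+g_i|/(b_i-a_i)=1$, one computes
$$
\nu_\Delta(\Delta)=\lim_n\Big(\prod_{i\le n}\mu_i(\Delta_i)\Big)\Big(\prod_{i>n}\lambda_i(\Delta_i)\Big)=1,
$$
$$
\nu_\Delta(\Delta+g)=\lim_n\Big(\prod_{i\le n}\mu_i(\Delta_i+g_i)\Big)\Big(\prod_{i>n}\lambda_i\big((\Delta_i+g_i)\cap\Delta_i\big)\Big)=\lim_n\prod_{i>n}c_i ,
$$
where $c_i:=\max\{0,\,1-|g_i|/(b_i-a_i)\}$. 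If $(2)$ fails, then for every $n_g$ the series $\sum_{i\ge n_g}\ln\big(1-|g_i|/(b_i-a_i)\big)$ is not convergent; since all of its terms are $\le 0$ (with value $-\infty$ whenever $|g_i|\ge b_i-a_i$), it can only diverge to $-\infty$, so $\prod_{i>n}c_i=0$ for every $n$ and hence $\nu_\Delta(\Delta+g)=0\neq 1=\nu_\Delta(\Delta)$. Therefore $g\notin G_\Delta$, i.e. $\neg(1)$.

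For $(2)\Rightarrow(1)$, suppose the series in $(2)$ converges for some $n_g$. This forces $|g_i|/(b_i-a_i)\to 0$ and $\sum_i\min\{|g_i|/(b_i-a_i),1\}<\infty$, and, being the tail of a convergent infinite product, $\prod_{i>n}c_i\to 1$ as $n\to\infty$. I would then verify $\nu_\Delta(X+g)=\nu_\Delta(X)$ for every $X\in B(\mathbf{R}^N)$ in three steps. \textbf{(i) Localization.} Each $A_n$ forces its coordinates of index $>n$ into $\Delta_i$, hence $A_n\subseteq\Delta^{\infty}:=\{x\in\mathbf{R}^N:x_i\in\Delta_i\ \mbox{for all large}\ i\}$, and therefore $\nu_\Delta(\mathbf{R}^N\setminus\Delta^{\infty})=0$. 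On the sets $B_K:=\big(\prod_{i\le K}[-K,K]\big)\times\prod_{i>K}\Delta_i$, which satisfy $\nu_\Delta(B_K)<\infty$ and $\bigcup_K B_K=\Delta^{\infty}$, the measure $\nu_\Delta$ coincides with an explicit finite product measure whose tail factor is $\prod_{i>K}\lambda_i$; applying the first Borel--Cantelli lemma to this product, and using $\sum_i\min\{|g_i|/(b_i-a_i),1\}<\infty$, one shows that $\Delta^{\infty}\triangle(\Delta^{\infty}+g)$ and $\Delta^{\infty}\triangle(\Delta^{\infty}-g)$ are $\nu_\Delta$-null. Consequently the translate $\nu^{g}(\cdot):=\nu_\Delta(\cdot+g)$ is again a $\sigma$-finite measure concentrated on $\Delta^{\infty}$, and the identity for $X$ is equivalent to the one for $X\cap\Delta^{\infty}$. \textbf{(ii) Reduction to a $\pi$-system.} The collection $\mathcal{P}$ of ``admissible rectangles'' $\big(\prod_{i\le m}I_i\big)\times\prod_{i>m}\Delta_i$ ($m\in N$, each $I_i$ a Borel subset of $\mathbf{R}$) is a $\pi$-system, it contains the exhausting sequence $(B_K)$, and it generates the trace Borel $\sigma$-algebra on $\Delta^{\infty}$; so by the $\sigma$-finite version of the $\pi$--$\lambda$ (Dynkin) theorem it suffices to prove $\nu^{g}(P)=\nu_\Delta(P)$ for $P\in\mathcal{P}$. \textbf{(iii) Computation on $\mathcal{P}$.} For $P=\big(\prod_{i\le m}I_i\big)\times\prod_{i>m}\Delta_i$ and $n\ge m$ one obtains, using $\mu_i(I_i+g_i)=\mu_i(I_i)$ and $\mu_i(\Delta_i+g_i)=1$,
$$
\overline{\nu}_n(P)=\prod_{i\le m}\mu_i(I_i),\qquad
\overline{\nu}_n(P+g)=\Big(\prod_{i\le m}\mu_i(I_i)\Big)\prod_{i>n}c_i ,
$$
and letting $n\to\infty$, together with $\prod_{i>n}c_i\to 1$, gives $\nu_\Delta(P+g)=\prod_{i\le m}\mu_i(I_i)=\nu_\Delta(P)$. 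Hence $g\in G_\Delta$.

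I expect the crux to be step (i): making it rigorous that $\nu_\Delta$ and its $g$-translate are carried, modulo genuinely $\nu_\Delta$-negligible sets, by one and the same ``core'' $\Delta^{\infty}$, and that both are $\sigma$-finite, so that the $\pi$--$\lambda$ extension of step (ii) is legitimate in spite of $\nu_\Delta(\mathbf{R}^N)=\infty$ and of the fact that $\nu_\Delta$ assigns the value $0$ or $\infty$ to every finite-dimensional cylinder (which is why a naive monotone-class argument over cylinders would wrongly give $G_\Delta=\mathbf{R}^N$). This is exactly the place where hypothesis $(2)$ is used with its full force, through $\sum_i\min\{|g_i|/(b_i-a_i),1\}<\infty$ and Borel--Cantelli; steps (ii) and (iii) are then routine bookkeeping with the defining limit $\nu_\Delta=\lim_n\overline{\nu}_n$, and the converse implication drops out of the behaviour of $\nu_\Delta$ on the single set $\Delta$.
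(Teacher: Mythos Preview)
The paper does not actually prove this lemma: it is quoted verbatim from \cite{Pan02} (Theorem~1.4) and used as a black box, so there is no ``paper's own proof'' to compare against. Your argument is therefore a genuine addition rather than a reconstruction.

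On the mathematics, both implications are handled correctly in outline. The contrapositive $(\neg 2)\Rightarrow(\neg 1)$ via the single test set $\Delta$ is clean and exactly the right idea: the computation $\overline{\nu}_n(\Delta+g)=\prod_{i>n}c_i$ is correct, and the divergence of every tail of $\sum_i\ln(1-|g_i|/(b_i-a_i))$ indeed forces every such tail product to vanish. For $(2)\Rightarrow(1)$, your three–step scheme (localize to $\Delta^{\infty}=\bigcup_n A_n$, reduce to a $\pi$-system of ``admissible rectangles'', compute on rectangles using translation invariance of each $\mu_i$ together with $\prod_{i>n}c_i\to 1$) is sound, and your remark that a naive monotone-class argument over ordinary cylinders would be vacuous because $\nu_\Delta$ assigns them $0$ or $\infty$ is exactly the point.

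One small gap to patch: your exhausting sets $B_K=\big(\prod_{i\le K}[-K,K]\big)\times\prod_{i>K}\Delta_i$ need not cover $\Delta^{\infty}$ when the intervals $\Delta_i$ drift to infinity (e.g.\ $\Delta_i=[2^i,2^i+1)$: then no $B_K$ contains any point of $\Delta^{\infty}$ with $x_i\in\Delta_i$ for $K\ge i>K_0$, since $2^i>K$). Replace $[-K,K]$ by $[-M_K,M_K]$ with $M_K\ge\max_{i\le K}(|a_i|+|b_i|)+K$ (or simply use $B_K=A_K\cap\{|x_i|\le M_K,\ i\le K\}$), and the exhaustion, the finiteness $\nu_\Delta(B_K),\,\nu^g(B_K)<\infty$, and the $\pi$--$\lambda$ step all go through unchanged. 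With that adjustment your proof is complete.
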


 Let ${\bf R}^{(N)}$ be the space of all finite
sequences, i.e.,
$$
{\bf R}^{(N)}=\{(g_i)_{i \in N}|(g_i)_{i \in N} \in {\bf R}^N ~\&~
\mbox{card}\{i|g_i\neq 0\} < \aleph_0 \}.
$$

  It is clear that, on the one hand, for an arbitrary compact infinite-dimensional
parallelepiped $\Delta=\prod\limits_{k \in N}[a_k,b_k]$, we have
$$
{\bf R}^{(N)} \subset G_{\Delta}.
$$
 On the other hand, $G_{\Delta} \setminus {\bf R}^{(N)} \neq \emptyset$ since an
element $(g_i)_{i \in N}$  defined by
$$
(\forall i)(i \in N \rightarrow g_i=(1- \mbox{exp} \{ -
\frac{b_i-a_i}{2^i} \} \times (b_i-a_i)))
$$
belongs to the difference $G_{\Delta} \setminus {\bf R}^{(N)}.$

  It is easy to show  that the vector space $G_{\Delta}$ is everywhere dense
in ${\bf R}^N$ with respect to the Tychonoff topology since ${\bf
R}^{(N)} \subset G_{\Delta}$.

\medskip

Below we present  an example of the product of an infinite family of  Borel probability measures on $R$ with strictly positive
continuous distribution functions  and a Moore-Yamasaki-Kharazishvili  type measure in $\mathbb{R^N}$, such that these  measures are equivalent.

\medskip

Let $(c_n)_{n \in N}$ be a sequence of positive numbers such that $0<c_n< l $. On the
real axis  $R$, for each $n$ consider a continuous function $f_n(x)$ which satisfies:
$$
0< f_n(x)<1, \int_{-\infty}^{+\infty}f_n(x)dx=1,
$$
$$
f_n(x)=c_k ~\mbox{for}~ x  \in [0,1] .
$$

Such a function $f_n(x)$ exists certainly  for any $n \in N$.

For $n \in N$, let us denote by  $\mu_n$ a Borel probability measure on $R$ defined  by the distribution density function $f_n$.

\medskip

{\bf Fact 3.1}~{\it  If $\prod_{n \in N}c_n >0$, then the measures $\prod_{n \in N}\mu_n$ and  $\nu_{[0,1]^{N}}$ are equivalent.}

\begin{proof} By the Fubini theorem, one can easily prove that the measure $\prod_{n \in N}\mu_n$ is ${\bf R}^{(N)}$-quasiinvariant.
According to \cite{Shim75}, every
product measure on ${\bf R}^{N}$  is ${\bf R}^{(N)}$-ergodic. Therefore, $\prod_{n \in N}\mu_n$, hence $\nu_{[0,1]^{N}}$,  too is ${\bf R}^{(N)}$-
ergodic.

For $x = (x_n) \in {\bf R}^{N}$, define a function  $f(x)$ by:
$$
f(x)=\prod_{n \in N}f_n(x).  \eqno (3.1)
$$
Since $0<f(x_n)<1$, the partial product decreases monotonically, so that the
infinite product in (3.1) exists certainly. If $x \in A_n$, then $x_k \in [0,1]$ for $k>n$, so
we have
$$
f(x)=\prod_{k=1}^n f_k(x_k) \prod_{k>n} c_k > 0.
$$
Thus $f(x)$ is positive on $A_n$, hence positive on $\cup_{n \in N}A_n$, too.  On the other hand, since
$\nu_{[0,1]^{N}}({\bf R}^{N} \setminus \cup_{n \in N}A_n)=0,$  we see that $f(x)$  is positive for $\nu_{[0,1]^{N}}$-almost all $x$.

Now, define a measure $\nu^{'}$  on ${\bf R}^{N}$ by
$$
\nu^{'}(X)=\int_{X}f(x)d \nu_{[0,1]^{N}}(x)
$$
for $X \in \mathbf{B}({\bf R}^{N})$.

Let us show that $\prod_{n \in N}\mu_n=\nu^{'}$. For this it suffices to show that for each $A \in \mathbf{B}(R^n)$ we have
$$
\nu^{'}(A \times {\bf R}^{N \setminus \{ 1, \cdots, n\}})=\prod_{n \in N}\mu_n(A \times {\bf R}^{N \setminus \{ 1, \cdots, n\}}).
$$
Indeed, we have

$$
\nu^{'}(A \times {\bf R}^{N \setminus \{ 1, \cdots, n\}})=\int_{A \times {\bf R}^{N \setminus \{ 1, \cdots, n\}}}f(x)d \nu_{[0,1]^{N}}(x)=
$$
$$
\lim_{m \to +\infty}\int_{A_m \cap (A \times {\bf R}^{N \setminus \{ 1, \cdots, n\}})}f(x)d \nu_{[0,1]^{N}}(x)=
$$
$$
\lim_{m \to +\infty}\int_{A \times \prod_{k=n+1}^m {\bf R} \times \prod_{k >m}[0,1]}f(x)d \nu_{[0,1]^{N}}(x)=
$$
$$
\lim_{m \to +\infty}\int_{A \times \prod_{k=n+1}^m {\bf R} \times \prod_{k >m}[0,1]}f(x)d (\prod_{k=1}^m \mu_k \times \prod_{k>m}\lambda_k)=
$$
$$
\lim_{m \to +\infty} \int_{A \times \prod_{k=n+1}^m {\bf R}}(\int_{\prod_{k >m}[0,1]}f(x)d\prod_{k>m}\lambda_k)d \prod_{k=1}^m\mu_k=
$$
$$
\lim_{m \to +\infty}\int_{\prod_{k >m}[0,1]}\prod_{k>m}f_k(x_k)d\prod_{k>m}\lambda_k \times
 $$
 $$\lim_{m \to +\infty} \int_{A \times \prod_{k=n+1}^m {\bf R}}\prod_{k=1}^mf_k(x_k)d \prod_{k=1}^m\mu_k=
$$

$$
\lim_{m \to +\infty}\int_{\prod_{k >m}[0,1]}\prod_{k>m}f_k(x_k)d\prod_{k>m}\lambda_k \times
 $$
 $$\lim_{m \to +\infty} \int_{A}\prod_{k=1}^n f_k(x_k)d \prod_{k=1}^n\mu_k \times \int_{\prod_{k=n+1}^m {\bf R}}\prod_{k=n+1}^mf_k(x_k)d \prod_{k=n+1}^m\mu_k =
$$

$$
\lim_{m \to +\infty} \prod_{k>m}c_k \times
\prod_{k=1}^n\mu_k(A)=\prod_{k=1}^n\mu_k(A)=\prod_{k \in N}\mu_k(A \times {\bf R}^{N \setminus \{ 1, \cdots, n\}}  ) .
$$
This ends the proof of Fact 3.1.

\end{proof}

\begin{rem} Let the product-measure $\prod_{k \in N}\mu_k$ comes from  Fact 3.1. Then by virtue of Lemma 3.1, we know
 that the group of all admissible translations (in the sense of invariance) for the measure
 $\nu_{[0,1]^{N}}$ is $l_1=\{ (x_k)_{k \in N}: (x_k)_{k \in N}\in R^N~\&~\sum_{k \in N}|x_k|<+\infty\}$.
  Following Fact 3.1, the measures  $\prod_{k \in N}\mu_k$ and $\nu_{[0,1]^{N}}$ are equivalent,  which implies that
  the group of all admissible translations (in the sense of quasiinvariance) for the measure $\prod_{k \in N}\mu_k$ is equal to $l_1$.

For $(x_k)_{k \in N} \in l_1$,  we set $\nu_k(X)=\mu_k(X-x_k)$ for each $X \in B(R)$. It is obvious that $\mu_k$ and~ $\nu_k$ are equivalent for each $k \in N$. For $k \in N$ and $x \in R$, we put
$\rho_k(x)=\frac{d\nu_k(x)}{d\mu_k(x)}$. Let us consider the product-measures $\mu=\prod\limits_{k \in N}\mu_k$ and
$\nu=\prod\limits_{k \in N}\nu_k$. On the one hand, following our observation,  the measures $\mu$ and $\nu$ are equivalent.
On the other hand, by virtue  of Kakutani's well known  result (see, \cite{Kak48}), since the measures $\mu$ and $\nu$ are
equivalent, we deduce that the infinite product $\prod\limits_{k \in
N}\alpha_k$~ is divergent to zero, where
$\alpha_k=\int\limits_{R}\sqrt{\rho_k(x_k)}d\mu_k(x_k)$. In this
case $r_n(x)=\prod\limits_{k=1}^n\rho_k(x)$~is convergent (in the
mean) to the function $r(x)=\prod\limits_{k=1}^{\infty}\rho_k(x)$
which is the density of the measure $\nu$  with respect to $\mu$,
i.e.,
$$
r(x)=\frac{d\nu(x)}{d\mu(x)}.
$$
\end{rem}

\begin{rem} The approach used in the proof of Fact 3.1 is taken  from  \cite{Yam80}(see  Proposition 4.1, p. 702).

\end{rem}

In the context of Fact 3.1 we state the following

\medskip

{\bf Problem 3.1} Do there exist a family $(\mu_k)_{k \in N}$ of linear Gaussian probability measures on $R$ and a Moore-Yamasaki-Kharazishvili type measure $\lambda$ in  ${\bf R^N}$ such that the measures $\prod\limits_{k \in N}\mu_k$ and $\lambda$ are equivalent?

\section{Solution of Problems 1.3 - 1.4}

We present a new approach for the solution of Problems 1.3 - 1.4, which is quite different from the approach introduced in \cite{Kak48}. Our approach uses the technique of the so-called uniformly distributed sequences. The main notions and auxiliary propositions are taken  from \cite{KuNi74}.

\begin{defn} \cite{KuNi74}
~ A sequence $(x_k)_{k \in N}$ of real numbers from the interval
$(a, b)$ is said to be equidistributed or uniformly distributed on
an interval $(a, b)$ if for any subinterval $[c, d]$ of  $(a, b)$
we have
$$ \lim_{n \to \infty}
n^{-1}\#(\{x_1, x_2, \cdots, x_n\} \cap [c,d])=(b-a)^{-1}(d-c),
$$ where $\#$ denotes the counting measure.
\end{defn}

Now let $X$ be a compact Polish space and $\mu$ be a probability
Borel measure on $X$. Let $\mathcal{R}(X)$ be a space of all
bounded continuous measurable functions defined on  $X$.

\begin{defn}  A sequence
$(x_k)_{k \in N}$ of elements of $X$ is said to be
$\mu$-equidistributed or $\mu$-uniformly distributed on $X$
 if for every $f \in \mathcal{R}(X)$
we have
$$
\lim_{n \to \infty}n^{-1}\sum_{k=1}^nf(x_k) =\int_{X}fd\mu.
$$

\end{defn}
\begin{defn} (\cite{KuNi74}, Lemma 2.1, p.
199)~Let
 $f \in \mathcal{R}(X)$. Then, for  $\mu^N$-almost
 every sequence $(x_k)_{k \in N} \in X^{N}$,   we have
$$
\lim_{n \to \infty}n^{-1}\sum_{k=1}^nf(x_k)=\int_{X}fd\mu.
$$
\end{defn}

\begin{lem}(\cite{KuNi74}, pp. 199-201)~
Let $S$ be a set of all $\mu$-equidistributed sequences on $X$.
Then we have $\mu^{N}(S)=1$.
\end{lem}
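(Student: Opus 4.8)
The statement to prove is Lemma 4.2: if $S$ denotes the set of all $\mu$-equidistributed sequences on a compact Polish space $X$, then $\mu^N(S)=1$. The plan is to reduce the uncountable intersection of conditions defining $S$ to a countable one, and then apply Definition 4.3 (the single-function almost-sure ergodic statement) together with countable additivity. The key observation is that $\mathcal{R}(X)$, the space of bounded continuous functions on the compact metric space $X$, is separable in the supremum norm; hence there is a countable dense subset $\{g_m : m \in N\}$. I would first verify that a sequence $(x_k)_{k \in N}$ is $\mu$-equidistributed if and only if $\lim_{n\to\infty} n^{-1}\sum_{k=1}^n g_m(x_k) = \int_X g_m\, d\mu$ holds for every $m \in N$: the forward direction is trivial, and for the converse, given arbitrary $f \in \mathcal{R}(X)$ and $\eps>0$, pick $g_m$ with $\norm{f-g_m}_\infty < \eps$; then the Cesàro averages of $f$ and of $g_m$ differ by at most $\eps$ uniformly in $n$, and $\abs{\int_X f\,d\mu - \int_X g_m\,d\mu} < \eps$, so a standard $3\eps$ argument gives convergence for $f$.

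Next, for each fixed $m \in N$, let $S_m = \{(x_k)_{k\in N} \in X^N : \lim_{n\to\infty} n^{-1}\sum_{k=1}^n g_m(x_k) = \int_X g_m\, d\mu\}$. By Definition 4.3 applied to the function $g_m \in \mathcal{R}(X)$, we have $\mu^N(S_m)=1$ (one should note $S_m$ is genuinely $\mu^N$-measurable, being defined by a countable combination of measurable coordinate functions and limit operations). By the equivalence established in the first step, $S = \bigcap_{m \in N} S_m$. Since this is a countable intersection of sets of full measure, $\mu^N(X^N \setminus S) \le \sum_{m \in N} \mu^N(X^N \setminus S_m) = 0$, whence $\mu^N(S) = 1$.

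The only genuine obstacle is the separability of $\mathcal{R}(X)$ in the uniform norm; everything else is bookkeeping. This follows because $X$ is a compact metric space: by the Stone–Weierstrass theorem (or directly, using a countable dense subset $D \subseteq X$ and the functions $x \mapsto \max(0, 1 - k\,d(x,y))$ for $y \in D$, $k \in N$, whose rational linear combinations form a countable uniformly dense family), $C(X)$ is separable. I would cite this as a standard fact rather than reprove it. A minor secondary point worth a sentence is measurability of the limit sets $S_m$ and of $S$ with respect to the Borel $\sigma$-algebra of $X^N$ (equivalently the product $\sigma$-algebra, since $X$ is Polish), which is immediate since each $g_m$ is Borel and countable sups, infs, and limits of measurable functions are measurable; hence the conclusion $\mu^N(S)=1$ is meaningful as stated.
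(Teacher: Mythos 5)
Your proof is correct and follows essentially the same route as the cited source (the paper itself gives no proof, deferring to Kuipers--Niederreiter, where the argument is exactly this: separability of $C(X)$ for compact metric $X$, the almost-sure convergence for each fixed continuous $f$, and a countable intersection of full-measure sets). The only cosmetic point is that the single-function almost-sure statement you invoke is mislabelled in the paper as ``Definition 4.3''; it is in fact the lemma from the reference, and you use it correctly.
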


\begin{cor}(\cite{ZerPanSaat13}, Corollary 2.3, p.
473)  Let $\ell_1$ be a Lebesgue measure on $(0,1)$.
 Let $D$ be a set of all $\ell_1$-equidistributed sequences on
$(0,1)$. Then we have ~$\ell_1^{N}(D)=1$.
\end{cor}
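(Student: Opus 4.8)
The plan is to deduce the statement from Lemma~4.4 after compactifying the base space. I would take $X=[0,1]$, which is a compact Polish space, and let $\mu$ be the Borel probability measure on $[0,1]$ that extends $\ell_1$ (so $\mu(\{0\})=\mu(\{1\})=0$, $\mu$ agrees with $\ell_1$ on the Borel subsets of $(0,1)$, and $\mu([0,1])=1$). The first thing to record is that $\mu^{N}$ lives, up to a null set, on $(0,1)^{N}$: since
$$
[0,1]^{N}\setminus (0,1)^{N}=\bigcup_{k\in N}\{\,x\in[0,1]^{N}: x_k\in\{0,1\}\,\}
$$
and each member of this countable union is $\mu^{N}$-null, we get $\mu^{N}((0,1)^{N})=1$; moreover $\mu^{N}$ restricted to the Borel subsets of $(0,1)^{N}$ coincides with $\ell_1^{N}$. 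Thus it suffices to prove $\mu^{N}(D)=1$, where $D$ now sits inside the full-measure Borel set $(0,1)^{N}$.

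Next I would invoke Lemma~4.4 (the assertion that the set of all $\mu$-equidistributed sequences has full $\mu^{N}$-measure) for this $X$ and $\mu$: the set $S\subseteq[0,1]^{N}$ of all sequences that are $\mu$-equidistributed on $[0,1]$ in the functional sense of Definition~4.2 satisfies $\mu^{N}(S)=1$; hence so does $S':=S\cap(0,1)^{N}$. One should note in passing that $S$ is Borel — by separability of $\mathcal{R}([0,1])$ it is a countable intersection, over a dense sequence $(f_j)$, of the Borel sets on which $n^{-1}\sum_{k=1}^n f_j(x_k)\to\int_{[0,1]}f_j\,d\mu$ — so $S'$ is a Borel subset of $(0,1)^{N}$ of full $\ell_1^{N}$-measure. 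The one substantive step is the inclusion $S'\subseteq D$, that is, a sequence $(x_k)_{k\in N}\in(0,1)^{N}$ which is $\mu$-equidistributed on $[0,1]$ in the functional sense is $\ell_1$-equidistributed on $(0,1)$ in the interval sense of Definition~4.1. This is the classical Weyl equivalence of the two formulations: fix a subinterval $[c,d]\subset(0,1)$ and $0<\eps<(d-c)/2$, and build piecewise-linear ``tent'' functions $g_\eps,h_\eps\in\mathcal{R}([0,1])$ with $g_\eps\le \mathbf{1}_{[c,d]}\le h_\eps$, $\int_{[0,1]}g_\eps\,d\mu\ge (d-c)-2\eps$, and $\int_{[0,1]}h_\eps\,d\mu\le (d-c)+2\eps$. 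Applying the functional equidistribution of $(x_k)$ to $g_\eps$ and to $h_\eps$ gives
$$
(d-c)-2\eps\le \liminf_{n\to\infty}\frac{\#(\{x_1,\dots,x_n\}\cap[c,d])}{n}\le \limsup_{n\to\infty}\frac{\#(\{x_1,\dots,x_n\}\cap[c,d])}{n}\le (d-c)+2\eps,
$$
and letting $\eps\downarrow 0$ yields $\lim_{n\to\infty} n^{-1}\#(\{x_1,\dots,x_n\}\cap[c,d])=d-c=(1-0)^{-1}(d-c)$, which is exactly the condition of Definition~4.1 with $(a,b)=(0,1)$. Hence $(x_k)\in D$.

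Combining the two steps gives $D\supseteq S'$ with $\mu^{N}(S')=\ell_1^{N}(S')=1$, so $\ell_1^{N}(D)=1$ (interpreting $\ell_1^{N}$ as its usual completion if one wants $D$ itself, rather than just the Borel subset $S'$, to lie in the domain). I do not anticipate a genuine obstacle: essentially all of the real content is already packaged in Lemma~4.4, and what remains is (i) the bookkeeping of the compactification, which is routine, and (ii) the sandwiching argument relating Definitions~4.1 and~4.2, which is the standard Weyl criterion. The only point that genuinely deserves to be spelled out is the measurability of $D$, and routing the argument through the Borel full-measure set $S'$ is the clean way to dispose of it.
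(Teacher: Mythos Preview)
Your argument is correct. In the paper, this corollary is simply quoted from \cite{ZerPanSaat13} and no proof is given; its placement immediately after Lemma~4.4 signals that it is meant as the direct specialization of that lemma to Lebesgue measure on the unit interval. Your write-up supplies exactly the bridge the paper leaves implicit: (i) passing from the non-compact interval $(0,1)$ to the compact $X=[0,1]$ so that Lemma~4.4 applies verbatim, and (ii) the Weyl sandwiching argument that converts the functional notion of $\mu$-equidistribution in Definition~4.2 into the interval-counting notion of Definition~4.1. Neither step is present in the paper (nor is the Borel-measurability remark), so your treatment is more detailed but follows the same intended route rather than a genuinely different one.
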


\begin{defn}
Let $\mu$ be a
probability Borel measure on $R$ with a distribution function $F$. A sequence $(x_k)_{k \in N}$ of elements of
$R$ is said to be $\mu$-equidistributed or $\mu$-uniformly
distributed on $R$ if for every interval $[a, b] (-\infty \le a <
b \le +\infty)$ we have
$$
\lim_{n \to \infty} n^{-1} \# ([a,b] \cap \{x_1, \cdots, x_n \}
)=F(b)-F(a).
$$
\end{defn}

\begin{lem}(\cite{ZerPanSaat13}, Lemma 2.4, p.
473) Let $(x_k)_{k \in N}$ be an
$\ell_1$-equidistributed sequence on $(0,1)$, $F$ be a strictly
increasing  continuous distribution function on $R$ and $p$ be a
Borel probability measure on $R$ defined by $F$. Then
$(F^{-1}(x_k))_{k \in N}$ is $p$-equidistributed on $R$.
\end{lem}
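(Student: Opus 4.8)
The plan is to transport the interval structure of $R$ onto $(0,1)$ through $F$ and then invoke the hypothesis on $(x_k)_{k\in N}$ directly. Being a strictly increasing continuous distribution function, $F$ is a continuous strictly increasing bijection of $R$ onto the open interval $(0,1)$: injectivity comes from strict monotonicity, and surjectivity from the intermediate value theorem together with $\lim_{x\to-\infty}F(x)=0$ and $\lim_{x\to+\infty}F(x)=1$. Hence $F^{-1}\colon(0,1)\to R$ is well defined, continuous and strictly increasing, and the sequence $(F^{-1}(x_k))_{k\in N}$ is meaningful. First I would record the elementary order-preservation fact: with the convention $F(-\infty):=0$ and $F(+\infty):=1$, for every $x\in(0,1)$ and every pair $-\infty\le a<b\le+\infty$ one has $F^{-1}(x)\in[a,b]$ if and only if $x\in[F(a),F(b)]$, which follows by applying the increasing map $F$ (respectively $F^{-1}$) to the two sides. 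Consequently, for each $n\in N$,
$$
\#([a,b]\cap\{F^{-1}(x_1),\dots,F^{-1}(x_n)\})=\#([F(a),F(b)]\cap\{x_1,\dots,x_n\}).
$$

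Next I would divide by $n$ and pass to the limit. When $0<F(a)$ and $F(b)<1$ (in particular whenever both $a$ and $b$ are finite), the set $[F(a),F(b)]$ is a closed subinterval lying strictly inside $(0,1)$, so the definition of an $\ell_1$-equidistributed sequence on $(0,1)$ (Definition~4.1 with $(a,b)=(0,1)$, for which the normalizing factor $(b-a)^{-1}$ equals $1$) gives
$$
\lim_{n\to\infty}n^{-1}\#([F(a),F(b)]\cap\{x_1,\dots,x_n\})=F(b)-F(a)=p([a,b]),
$$
the last equality because $p$ is the Borel probability measure on $R$ determined by $F$. Combined with the counting identity above, this is precisely the relation demanded by Definition~4.6 for the interval $[a,b]$.

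The only delicate point --- and the step I expect to be the main obstacle --- is the boundary case $a=-\infty$ or $b=+\infty$, where the transported interval $[F(a),F(b)]$ abuts an endpoint of $(0,1)$ and Definition~4.1 does not apply verbatim. I would dispose of it by a sandwich argument using only that $\ell_1$ is non-atomic (equivalently, that $F$ is continuous). Consider for instance $a=-\infty$ with $b$ finite, and put $\beta:=F(b)\in(0,1)$; since all the $x_k$ lie in $(0,1)$, for every small $\eps>0$ we have the inclusions $[\eps,\beta]\subseteq(0,\beta]$ and $[\beta+\eps,1-\eps]\subseteq(\beta,1)$, whence the quantity $n^{-1}\#((0,\beta]\cap\{x_1,\dots,x_n\})$, which equals $1-n^{-1}\#((\beta,1)\cap\{x_1,\dots,x_n\})$, is squeezed between $n^{-1}\#([\eps,\beta]\cap\{x_1,\dots,x_n\})$ and $1-n^{-1}\#([\beta+\eps,1-\eps]\cap\{x_1,\dots,x_n\})$. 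Letting $n\to\infty$ (Definition~4.1 now does apply to the two proper subintervals) and then $\eps\to0$ forces this quantity to converge to $\beta=F(b)-F(-\infty)=p([a,b])$. The cases $b=+\infty$ with $a$ finite, and $a=-\infty$ together with $b=+\infty$ (where the count is simply $n$ and the limit is $1=p(R)$), are handled in the same way. Together with the previous paragraph this verifies the defining relation of $p$-equidistribution for every interval $[a,b]$, and hence $(F^{-1}(x_k))_{k\in N}$ is $p$-equidistributed on $R$, as claimed.
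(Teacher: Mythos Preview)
The paper does not supply its own proof of this lemma; it merely quotes the statement from \cite{ZerPanSaat13} and uses it as a black box. Consequently there is no in-paper argument against which to compare yours.

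Your proof is correct and is the natural one. The counting identity via the order-preserving bijection $F$ is exactly right, and you are correct to flag the boundary case: Definition~4.1 only concerns closed subintervals $[c,d]\subset(0,1)$ with $0<c\le d<1$, so when $a=-\infty$ or $b=+\infty$ the transported interval reaches an endpoint and an $\eps$-trimming/sandwich argument is needed. Your squeeze is valid (the complement trick $n^{-1}\#((0,\beta]\cap\{x_1,\dots,x_n\})=1-n^{-1}\#((\beta,1)\cap\{x_1,\dots,x_n\})$ is legitimate because all $x_k$ lie in $(0,1)$), and the resulting limits $\beta-\eps$ and $\beta+2\eps$ pin the value at $\beta$ after $\eps\to0$. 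The remaining endpoint cases are symmetric, as you note.
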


\begin{cor}(\cite{ZerPanSaat13}, Corollary 2.4, p.
473)  Let $F$ be a strictly
increasing continuous distribution  function on $R$ and $p$ be a
Borel probability measure on $R$ defined by $F$. Then for a set
$D_{F} \subset R^{N}$ of all $p$-equidistributed sequences  on $R$
we have :

(i) $D_{F}= \{ (F^{-1}(x_k))_{k \in N} : (x_k)_{k \in N} \in D
\}$;

(ii)~$p^N(D_{F})=1$.
\end{cor}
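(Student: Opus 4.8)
The plan is to derive both statements from the change-of-variables map induced by $F$, using the two facts already recorded about $\ell_1$-equidistributed sequences. Since $F$ is a strictly increasing continuous distribution function, it is a homeomorphism of $R$ onto the open interval $(0,1)$ (injectivity and continuity are immediate, and $F(-\infty)=0$, $F(+\infty)=1$ force the range to be exactly $(0,1)$). Write $F^{-1}\colon(0,1)\to R$ for the inverse, which is continuous and hence Borel measurable, and let $T\colon(0,1)^{N}\to R^{N}$ be the product map $T\big((x_k)_{k\in N}\big)=(F^{-1}(x_k))_{k\in N}$; this $T$ is a Borel isomorphism of $(0,1)^{N}$ onto $R^{N}$, so in particular $T(D)$ is Borel whenever $D$ is.

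For part (i), the inclusion $\{(F^{-1}(x_k))_{k\in N}:(x_k)_{k\in N}\in D\}\subseteq D_{F}$ is precisely the content of the lemma immediately preceding this corollary, applied term by term to each sequence of $D$. For the reverse inclusion I would start from an arbitrary $p$-equidistributed sequence $(y_k)_{k\in N}\in D_{F}$, set $x_k:=F(y_k)\in(0,1)$ so that $y_k=F^{-1}(x_k)$, and verify that $(x_k)_{k\in N}\in D$. Fixing a subinterval $[c,d]\subseteq(0,1)$, strict monotonicity of $F$ gives $x_k\in[c,d]\iff y_k\in[F^{-1}(c),F^{-1}(d)]$, so $\#([c,d]\cap\{x_1,\dots,x_n\})=\#([F^{-1}(c),F^{-1}(d)]\cap\{y_1,\dots,y_n\})$; dividing by $n$ and letting $n\to\infty$, the $p$-equidistribution of $(y_k)_{k\in N}$ gives the limit $F(F^{-1}(d))-F(F^{-1}(c))=d-c$, which (since the length of $(0,1)$ equals $1$) is exactly the defining condition for $(x_k)_{k\in N}$ to be $\ell_1$-equidistributed on $(0,1)$. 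Hence $D_{F}=T(D)$.

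For part (ii), I would first record the one-dimensional transport formula $(F^{-1})_{*}\ell_1=p$: for any interval $[a,b]$ with $-\infty\le a<b\le+\infty$ one has $(F^{-1})_{*}\ell_1([a,b])=\ell_1\big(\{x\in(0,1):F^{-1}(x)\in[a,b]\}\big)=F(b)-F(a)=p([a,b])$ (with the usual conventions $F(-\infty)=0$, $F(+\infty)=1$), and such intervals generate $B(R)$. It follows that the product map $T$ transports the product measure $\ell_1^{N}$ to $p^{N}$, i.e. $T_{*}(\ell_1^{N})=p^{N}$. Combining this with part (i) and the injectivity of $T$ (so that $T^{-1}(T(D))=D$), I obtain
$$
p^{N}(D_{F})=p^{N}\big(T(D)\big)=T_{*}(\ell_1^{N})\big(T(D)\big)=\ell_1^{N}\big(T^{-1}(T(D))\big)=\ell_1^{N}(D)=1,
$$
the last equality being the corollary stated above that $\ell_1^{N}(D)=1$.

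The only points that require any care are the verification that $F$ indeed maps $R$ bijectively onto $(0,1)$ with Borel-measurable inverse — routine from strict monotonicity, continuity, and the boundary behaviour of a distribution function — and the standard measure-theoretic fact that a countable product of pushforward measures is the pushforward under the product map of the product measure. Neither is a genuine obstacle, so I expect the whole argument to be short, essentially bookkeeping around the homeomorphism $F$.
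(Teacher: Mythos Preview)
Your proof is correct. The paper does not give its own proof of this corollary, merely citing it from \cite{ZerPanSaat13}; your argument---transporting along the product map induced by $F^{-1}$, invoking Lemma~4.7 for one inclusion in (i), verifying the converse directly, and then pushing forward $\ell_1^{N}$ to $p^{N}$ to deduce (ii) from Corollary~4.5---is precisely the derivation the paper's arrangement of auxiliary results is set up to suggest.
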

\medskip
\begin{lem}
 Let $F_1$  and $F_2$ be different strictly
increasing continuous distribution  functions on $R$,  and $p_1$ and $p_2$ be
Borel probability measures on $R$ defined by $F_1$ and $F_2$, respectively.  Then there does not exist
a sequence of real numbers $(x_k)_{k \in \mathbf{N}}$ which simultaneously is $p_1$-equidistributed and $p_2$-equidistributed.
\end{lem}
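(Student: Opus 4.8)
The plan is to argue by contradiction, exploiting nothing more than the uniqueness of the limit of a real sequence together with Definition~4.5. Suppose that some sequence $(x_k)_{k \in \mathbf{N}}$ of real numbers is simultaneously $p_1$-equidistributed and $p_2$-equidistributed on $R$. Since $F_1$ and $F_2$ are \emph{different} functions on $R$, there is a point $t_0 \in R$ with $F_1(t_0) \neq F_2(t_0)$. The whole proof will hinge on looking at the single interval $[-\infty, t_0]$ through the two equidistribution hypotheses.

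First I would apply Definition~4.5 to the measure $p_1$ and the interval $[-\infty, t_0]$: because $(x_k)_{k \in \mathbf{N}}$ is $p_1$-equidistributed, the sequence $a_n := n^{-1}\#\bigl([-\infty,t_0] \cap \{x_1,\dots,x_n\}\bigr)$ converges to $F_1(t_0) - F_1(-\infty) = F_1(t_0)$, using the normalization $F_1(-\infty) = 0$. Next I would apply the very same definition to $p_2$ and the same interval $[-\infty, t_0]$: because $(x_k)_{k \in \mathbf{N}}$ is also $p_2$-equidistributed, the \emph{same} numerical sequence $a_n$ converges to $F_2(t_0) - F_2(-\infty) = F_2(t_0)$.

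Since a convergent sequence of real numbers has exactly one limit, it follows that $F_1(t_0) = F_2(t_0)$, which contradicts the choice of $t_0$. Hence no sequence can be $p_1$-equidistributed and $p_2$-equidistributed at once, which is the assertion of the lemma.

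I do not expect any genuine obstacle here; the only points that deserve a word of care are that ``different distribution functions'' does give an actual point of disagreement $t_0 \in R$ (immediate, since $F_1,F_2$ are functions on $R$), and that the convention $F_i(-\infty)=0$ is what makes the two limits computed above equal precisely $F_1(t_0)$ and $F_2(t_0)$ rather than differences of values. Note that strict monotonicity and continuity of the $F_i$ are not needed for this lemma as such; they enter only in the companion statements (Lemma~4.6, Corollary~4.7) where $F_i^{-1}$ must be well defined.
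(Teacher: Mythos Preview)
Your argument is correct and is essentially identical to the paper's own proof: both pick a point where $F_1$ and $F_2$ differ, evaluate the counting limit on the half-line up to that point, and derive $F_1(t_0)=F_2(t_0)$ by uniqueness of limits. One cosmetic point: the relevant definition of $\mu$-equidistribution on $R$ is Definition~4.6 (and the companion results are Lemma~4.7 and Corollary~4.8), so your cross-references are off by one.
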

\begin{proof}
Assume the contrary and let $(x_k)_{k \in \mathbf{N}}$ be such a sequence. Since
$F_1$  and $F_2$  are different,  there is a point $x_0 \in \mathbb{R}$ such that $F_1(x_0)\neq F_2(x_0)$. The latter relation is not possible under our assumption  because
$(x_k)_{k \in \mathbf{N}}$ simultaneously is $p_1$-equidistributed and $p_2$-equidistributed, which implies
$$
F_1(x_0)=\lim_{n \to \infty}n^{-1}\# ((-\infty,x_0] \cap \{x_1, \cdots,
x_n \} )=F_2(x_0).
$$

\end{proof}

The next theorem contains the solution of Problem 1.3.

\begin{thm}
 Let $F_1$  and $F_2$ be different strictly
increasing continuous distribution  functions on $R$ and $p_1$ and $p_2$ be
Borel probability measures on $R$,  defined by $F_1$ and $F_2$, respectively.  Then the measures $p^N_1$ and $p^N_2$  are orthogonal.
\end{thm}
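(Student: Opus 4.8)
The plan is to exhibit an explicit Borel set that separates the two product measures, using the equidistribution machinery already set up. Since $F_1 \neq F_2$ are distinct strictly increasing continuous distribution functions, let $p_1, p_2$ be the associated Borel probability measures on $R$. Let $D_{F_1} \subseteq R^N$ be the set of all $p_1$-equidistributed sequences and $D_{F_2} \subseteq R^N$ the set of all $p_2$-equidistributed sequences. By Corollary 4.9(ii) we have $p_1^N(D_{F_1}) = 1$ and $p_2^N(D_{F_2}) = 1$.

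The key observation is that $D_{F_1}$ and $D_{F_2}$ are disjoint. Indeed, by Lemma 4.10 no sequence of reals can be simultaneously $p_1$-equidistributed and $p_2$-equidistributed, so $D_{F_1} \cap D_{F_2} = \emptyset$. It remains only to check that $D_{F_1}$ (or its complement) is a Borel subset of $R^N$; this follows from Corollary 4.9(i), which identifies $D_{F_i} = \{ (F_i^{-1}(x_k))_{k \in N} : (x_k)_{k \in N} \in D \}$ where $D$ is the set of $\ell_1$-equidistributed sequences in $(0,1)^N$, together with the fact that $D$ is Borel (it is a countable intersection over rational subintervals $[c,d]$ of limit conditions on continuous functionals of the coordinates, hence lies in $\mathbf{B}(R^N)$), and the map $(x_k)_k \mapsto (F_i^{-1}(x_k))_k$ is a Borel isomorphism of $(0,1)^N$ onto its image.

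With $E := D_{F_1} \in \mathbf{B}(R^N)$ in hand, we conclude: $p_1^N(E) = 1$, while $p_2^N(E) \le p_2^N(R^N \setminus D_{F_2}) = 0$ since $E \subseteq R^N \setminus D_{F_2}$. Thus $p_1^N(R^N \setminus E) = 0$ and $p_2^N(E) = 0$, which is precisely the definition of orthogonality of $p_1^N$ and $p_2^N$ given in the Introduction.

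The only mildly technical point—and the one I would expect to require a sentence of care—is the Borel measurability of the equidistribution sets, i.e. verifying that $D$ (equivalently each $D_{F_i}$) genuinely lies in $\mathbf{B}(R^N)$ rather than merely in some larger completed $\sigma$-algebra; everything else is an immediate consequence of Lemma 4.10 and Corollary 4.9. Since the equidistribution condition can be phrased as: for every pair of rationals $c < d$ in $(0,1)$, $\lim_n n^{-1}\#(\{x_1,\dots,x_n\} \cap [c,d]) = d - c$, and each such condition is a Borel condition on the sequence, the countable intersection is Borel, and no subtlety arises.
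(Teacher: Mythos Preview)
Your proof is correct and follows essentially the same route as the paper: define the equidistribution sets $D_{F_1}$, $D_{F_2}$, invoke the corollary giving $p_i^N(D_{F_i})=1$, and the lemma showing $D_{F_1}\cap D_{F_2}=\emptyset$; your added paragraph on Borel measurability of $D_{F_i}$ is a detail the paper leaves implicit (it simply asserts it later in Theorem~4.14). Note only that your reference numbers are off by one relative to the paper's numbering (what you call Corollary~4.9 and Lemma~4.10 are Corollary~4.8 and Lemma~4.9 here).
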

\begin{proof} Let $D_{F_1}$ and $D_{F_2}$  denote $p_1$-equidistributed and $p_2$-equidistributed sequences on $R$, respectively.  By Lemma 4.9  we know that
$D_{F_1} \cap D_{F_2}=\emptyset.$  By Corollary 4.8 we know that $p^N_1(D_{F_1})=1$ and $p^N_2(D_{F_2})=1$. This ends the proof of the theorem.
\end{proof}

\begin{defn} Let  $\{ \mu_i : i \in I\}$ be a  family  of probability measures defined on a measure space $(X,M)$. Let $S(X)$ be defined by
$$ S(X)=\cap_{i \in I}\mbox{dom}(\overline{\mu}_i),$$
where $\overline{\mu}_i$ denotes a usual completion of the measure $\mu_i$. We say that the family $\{ \mu_i : i \in I\}$ is strongly separable if  there  exists a partition $\{C_i :i  \in I\}$ of the space $X$ into elements of the $\sigma$-algebra $S(X)$ such that $\overline{\mu}_i(C_i)=1$ for each $i \in I$.
\end{defn}

\begin{defn} Let  $\{ \mu_i : i \in I\}$ be a  family  of probability measures defined on a measure space $(X,M)$. Let $S(I)$ denote a minimal $\sigma$-algebra generated by singletons of $I$  and the $\sigma$-algebra $S(X)$ of subsets of  $X$ be defined by
$$ S(X)=\cap_{i \in I}\mbox{dom}(\overline{\mu}_i),$$
where $\overline{\mu}_i$ denotes a usual completion of the measure $\mu_i$ for $i \in I$.
We say that a $(S(X),S(I))$-measurable mapping $T:X \to I$ is a well-founded estimate of an unknown parameter $i ~(i \in I)$ for the  family $\{ \mu_i : i \in I\}$  if the following condition
$$
(\forall i)(i \in I \rightarrow \mu_i(T^{-1}(\{i\})=1))
$$
holds true.
\end{defn}

One can easily get the validity of the following assertion.

\begin{lem}(\cite{ZerPanSaat13}, Lemma 2.5, p.
474) Let  $\{ \mu_i : i \in I\}$ be a  family  of probability measures defined on a measure space $(X,M)$.  The following propositions are equivalent:

(i)~ The family  of probability measures $\{ \mu_i : i \in I\}$ is strongly separable;

(ii) There exists  a well-founded estimate of an unknown  parameter $i ~(i \in I)$ for the  family $\{ \mu_i : i \in I\}$.

\end{lem}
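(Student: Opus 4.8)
The plan is to prove the two implications directly, exploiting the obvious correspondence between separating partitions and well-founded estimates given by $T(x)=i\iff x\in C_i$. No extra hypothesis on $I$ or on $(X,M)$ is needed; the only structural facts used are that $S(X)$ is a $\sigma$-algebra (being the intersection of the $\sigma$-algebras $\mbox{dom}(\overline{\mu}_i)$, $i\in I$) and that $S(I)$ is, by definition, the $\sigma$-algebra generated by the singletons of $I$.

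First I would prove (i)$\Rightarrow$(ii). Assume $\{C_i:i\in I\}$ is a partition of $X$ into sets from $S(X)$ with $\overline{\mu}_i(C_i)=1$ for every $i$. Since the $C_i$ are pairwise disjoint and cover $X$, the rule $T(x):=i$ for the unique index $i$ with $x\in C_i$ defines a map $T:X\to I$ satisfying $T^{-1}(\{i\})=C_i$, so that $\overline{\mu}_i(T^{-1}(\{i\}))=1$ for all $i$. It then remains to check measurability. Put $\A:=\{A\subseteq I:T^{-1}(A)\in S(X)\}$. Because $S(X)$ is a $\sigma$-algebra and preimages commute with the Boolean operations, $\A$ is a $\sigma$-algebra; moreover $\A$ contains every singleton $\{i\}$, since $T^{-1}(\{i\})=C_i\in S(X)$. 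Hence $\A\supseteq S(I)$, i.e. $T$ is $(S(X),S(I))$-measurable, and therefore $T$ is a well-founded estimate of the unknown parameter $i$.

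Conversely, for (ii)$\Rightarrow$(i), let $T:X\to I$ be a $(S(X),S(I))$-measurable map with $\overline{\mu}_i(T^{-1}(\{i\}))=1$ for all $i$, and set $C_i:=T^{-1}(\{i\})$. Each $\{i\}$ belongs to $S(I)$, so $C_i\in S(X)$ by measurability of $T$. Since $T$ is single-valued and defined on all of $X$, and $\{\{i\}:i\in I\}$ is a partition of $I$, the family $\{C_i:i\in I\}$ is a partition of $X$ into elements of $S(X)$, and $\overline{\mu}_i(C_i)=1$ for every $i$; thus $\{\mu_i:i\in I\}$ is strongly separable. The whole argument is elementary; the only step that calls for any attention is the measurability of $T$ in (i)$\Rightarrow$(ii), and even there the point is merely the standard fact that to verify measurability it suffices to pull a generating family — here the singletons of $I$ — back into $S(X)$. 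I therefore expect no genuine obstacle, which matches the statement being introduced as one whose validity ``one can easily get.''
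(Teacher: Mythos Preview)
Your argument is correct; the paper itself omits the proof entirely, introducing the lemma with the remark that ``one can easily get the validity of the following assertion,'' and your direct construction $T^{-1}(\{i\})=C_i$ together with the standard generating-set check for measurability is precisely the expected routine verification.
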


The next theorem contains the solution of Problem 1.4.

\begin{thm} Let  $\mathcal{F}$ be a  family  of all strictly increasing and continuous distribution functions on ${\bf R}$  and $p_{F}$ be a Borel probability measure
on $R$ defined by $F$ for each $F \in \mathcal{F}$. Then the family of Borel probability measures  $\{ p_F^N : F \in \mathcal{F})\} $ is strongly separable.
\end{thm}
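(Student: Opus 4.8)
The plan is to build the required measurable partition directly out of the sets of equidistributed sequences furnished by Corollary 4.8. For each $F\in\mathcal F$ let $D_F\subseteq\mathbf{R}^{N}$ be the set of all $p_F$-equidistributed sequences on $\mathbf{R}$ (Definition 4.6). First I would record three properties of this family. (i) Each $D_F$ is a Borel subset of $\mathbf{R}^{N}$: because $F$ is continuous, a routine argument reduces the defining condition to the countably many requirements obtained by restricting to intervals with rational (and $\pm\infty$) endpoints, and for each fixed $a<b$ the map $(x_k)_k\mapsto n^{-1}\#([a,b]\cap\{x_1,\dots,x_n\})=n^{-1}\sum_{k=1}^{n}\mathbf{1}_{[a,b]}(x_k)$ is Borel on $\mathbf{R}^{N}$, while the locus on which a sequence of Borel functions converges to a prescribed constant is Borel. (ii) $\overline{p_F^N}(D_F)=p_F^N(D_F)=1$ by Corollary 4.8(ii). (iii) By Lemma 4.9 the sets $\{D_F:F\in\mathcal F\}$ are pairwise disjoint, so for $F'\neq F$ one has $D_F\subseteq\mathbf{R}^{N}\setminus D_{F'}$ and hence $p_{F'}^N(D_F)\le p_{F'}^N(\mathbf{R}^{N}\setminus D_{F'})=0$; thus $D_F$ is $\overline{p_{F'}^N}$-null whenever $F'\neq F$.

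Next I would turn this disjoint family into an honest partition by dumping the leftover into a single distinguished cell. Fix any $F_0\in\mathcal F$ (the family $\mathcal F$ is nonempty — e.g. the logistic distribution function lies in it), put $R_0:=\mathbf{R}^{N}\setminus\bigcup_{F\in\mathcal F}D_F$, and set
$$
C_{F_0}:=D_{F_0}\cup R_0,\qquad C_F:=D_F\quad(F\in\mathcal F,\ F\neq F_0).
$$
By (iii) and the definition of $R_0$ the sets $C_F$ are pairwise disjoint with $\bigcup_{F\in\mathcal F}C_F=\mathbf{R}^{N}$, so $\{C_F:F\in\mathcal F\}$ is a partition of $\mathbf{R}^{N}$. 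It remains to check membership in $S(\mathbf{R}^{N})=\bigcap_{F'\in\mathcal F}\mathrm{dom}(\overline{p_{F'}^N})$ and full measure. For $F\neq F_0$ this is immediate: $C_F=D_F$ is Borel, hence in every $\mathrm{dom}(\overline{p_{F'}^N})$, and $\overline{p_F^N}(C_F)=1$ by (ii). For $C_{F_0}$, the Borel summand $D_{F_0}$ is harmless, so everything rests on $R_0$: since $R_0\subseteq\mathbf{R}^{N}\setminus D_{F'}$ and $p_{F'}^N(D_{F'})=1$, the set $R_0$ is a subset of a $\overline{p_{F'}^N}$-null Borel set, whence $R_0\in\mathrm{dom}(\overline{p_{F'}^N})$ with $\overline{p_{F'}^N}(R_0)=0$ for every $F'\in\mathcal F$. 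Therefore $C_{F_0}\in S(\mathbf{R}^{N})$ and $\overline{p_{F_0}^N}(C_{F_0})\ge\overline{p_{F_0}^N}(D_{F_0})=1$. Thus $\{C_F:F\in\mathcal F\}$ witnesses the strong separability of $\{p_F^N:F\in\mathcal F\}$ (and, by Lemma 4.13, provides a well-founded estimate of the parameter $F$).

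The single delicate point is the leftover set $R_0$. Since $\mathcal F$ is uncountable, there is no reason for $\bigcup_{F}D_F$, or for $R_0$, to be Borel, so one cannot simply take $\{D_F\}$ together with $R_0$ as a ready-made $S(\mathbf{R}^{N})$-partition. The crux is that $R_0$ is null with respect to the completion of \emph{every} $p_{F'}^N$ — it avoids the full-measure set $D_{F'}$ for each $F'$ — so $R_0$ belongs to $S(\mathbf{R}^{N})$ irrespective of its descriptive complexity, and absorbing it into the distinguished cell $C_{F_0}$ repairs the covering defect without spoiling measurability. The only supporting technicality is the Borel measurability of each $D_F$ asserted in (i), which I would dispatch by the indicated reduction to countably many rational-endpoint conditions (or, alternatively, by transporting the analogous statement for $(0,1)$ through the homeomorphism $(x_k)_k\mapsto(F^{-1}(x_k))_k$ as in Corollary 4.8(i)).
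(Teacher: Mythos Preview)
Your proof is correct and follows essentially the same route as the paper: you use Corollary 4.8 and Lemma 4.9 to obtain the pairwise disjoint Borel sets $D_F$ of full $p_F^N$-measure, fix a distinguished $F_0$, and absorb the leftover $R_0=\mathbf{R}^N\setminus\bigcup_F D_F$ into $C_{F_0}$, noting that $R_0$ is $\overline{p_{F'}^N}$-null for every $F'$ and hence lies in $S(\mathbf{R}^N)$. Your write-up is in fact slightly more careful than the paper's, since you make explicit both why each $D_F$ is Borel and why the (possibly non-Borel) set $R_0$ nonetheless lands in $S(\mathbf{R}^N)$.
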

\begin{proof} We denote by $D_{F}$ the set of all  $p_F$-equidistributed  sequences on $R$  for each $F \in \mathcal{F}$.  By Lemma 4.9  we know that
$D_{F_1} \cap D_{F_2}=\emptyset$ for each different $F_1,F_2 \in \mathcal{F}$.  By Corollary 4.8 we know that $p^N_F(D_{F})=1$    for each $F \in \mathcal{F}$.  Let us fix  $F_0 \in \mathcal{F}$ and define a family $(C_F)_{F  \in \mathcal{F}}$ of subsets of ${\bf R^N}$ as follows: $C_F=D_F$ for $F \in \mathcal{F} \setminus \{F_0\}$ and $C_{F_0}=R^N \setminus \cup_{ F \in \mathcal{F} \setminus \{F_0\}}D_F$. Since $D_F$ is a Borel subset of ${\bf R^N}$ for each $F \in \mathcal{F}$,  we claim that $C_F \in S({\bf R^N})$ for each $F \in \mathcal{F} \setminus \{F_0\}$. Since
$\overline{p_F^N}(R^N \setminus \cup_{ F \in \mathcal{F}}D_F)=0$ for each  $F \in \mathcal{F}$, we deduce that $R^N \setminus \cup_{ F \in \mathcal{F}}D_F \in \cap_{F \in \mathcal{F}}\mbox{dom}(\overline{p_F^N})=S(R^N)$. Since $S({\bf R^N})$ is an $\sigma$-algebra, we claim that $C_{F_0} \in S(R^N)$  because
$\overline{p_F^N}(R^N \setminus \cup_{ F \in \mathcal{F}}D_F)=0$ for each  $F \in \mathcal{F}$(equivalently, $R^N \setminus \cup_{ F \in \mathcal{F}}D_F  \in S(R^N)$), and

$$C_{F_0}=R^N \setminus \cup_{ F \in \mathcal{F} \setminus \{F_0\}}D_F=(R^N \setminus \cup_{ F \in \mathcal{F}}D_F)\cup D_{F_0}.
$$
This ends the proof of the theorem.

\end{proof}

By virtue of the results of Lemma 4.13 and Theorem 4.14 we get the following

\begin{cor} Let  $\mathcal{F}$ be a  family  of all strictly increasing and continuous distribution functions on ${\bf R}$. Then there exists  a well-founded estimate of an unknown distribution function  $F  ~(F \in \mathcal{F})$ for the family of Borel probability measures  $\{ p_F^N : F \in \mathcal{F}\}$.
\end{cor}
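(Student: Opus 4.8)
The plan is to combine the two results quoted immediately before the statement, namely Theorem~4.14 and Lemma~4.13, so that essentially no new work is required. First I would recall that, by Theorem~4.14, the family $\{ p_F^N : F \in \mathcal{F}\}$ is strongly separable in the sense of Definition~4.11: there is a partition $(C_F)_{F \in \mathcal{F}}$ of $R^N$ with $C_F \in S(R^N)=\cap_{F \in \mathcal{F}}\mbox{dom}(\overline{p_F^N})$ and $\overline{p_F^N}(C_F)=1$ for every $F \in \mathcal{F}$. Then I would apply Lemma~4.13 with $X=R^N$, $I=\mathcal{F}$ and $\mu_i=p_F^N$: since condition (i) of that lemma is satisfied, condition (ii) holds as well, and (ii) is precisely the assertion that there exists a well-founded estimate of the unknown distribution function $F\in\mathcal{F}$ for the family $\{ p_F^N : F \in \mathcal{F}\}$. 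That already finishes the proof.

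For the reader's convenience I would also write the estimate out explicitly. Using the partition $(C_F)_{F \in \mathcal{F}}$ provided by Theorem~4.14, define $T:R^N \to \mathcal{F}$ by $T(x)=F$ whenever $x \in C_F$; this is well defined exactly because $(C_F)_{F \in \mathcal{F}}$ is a partition of $R^N$. For each $F \in \mathcal{F}$ we have $T^{-1}(\{F\})=C_F \in S(R^N)$, so $T$ is $(S(R^N),S(\mathcal{F}))$-measurable, since $S(\mathcal{F})$ is generated by the singletons of $\mathcal{F}$. Finally $\overline{p_F^N}(T^{-1}(\{F\}))=\overline{p_F^N}(C_F)=1$ for every $F\in\mathcal{F}$, which is the defining property of a well-founded estimate in Definition~4.12.

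I do not expect a genuine obstacle here, since all the substance has already been absorbed into Theorem~4.14 --- whose proof rests on the equidistribution results of Corollary~4.8 together with the disjointness Lemma~4.9 --- and into the abstract equivalence of Lemma~4.13. The only point that deserves a moment's care is the measurability bookkeeping, namely that each $C_F$, and in particular the exceptional block $C_{F_0}=R^N \setminus \cup_{F \in \mathcal{F}\setminus\{F_0\}}D_F$, lies in $S(R^N)$; but this has already been checked inside the proof of Theorem~4.14 and can simply be invoked. Consequently the corollary follows at once.
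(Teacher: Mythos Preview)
Your proposal is correct and matches the paper's approach exactly: the paper simply states that the corollary follows from Lemma~4.13 and Theorem~4.14 without further argument, which is precisely the combination you spell out. Your explicit description of the estimator $T$ is a harmless elaboration of the abstract equivalence in Lemma~4.13.
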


\begin{rem}The validity of Theorem 4.14 and Corollary 4.15 can be obtained for an arbitrary family of strictly increasing and continuous distribution functions on ${\bf R}$.
Note that Corollary 4.15 extends the main result established  in  \cite{ZerPanSaat13}(see  Lemma 2.6, p. 476).
\end{rem}

\begin{rem} The requirements in Theorem 4.14  that all  Borel probability measures on  ${\bf R}$ are defined by  strictly increasing and continuous distribution functions
on  ${\bf R}$ and the measures under consideration are infinite powers of the corresponding measures are essential. Indeed, let $\mu$ be a linear  Gaussian measure on  ${\bf R}$ whose density distribution function has the form $f(x)=\frac{1}{\sqrt{2\pi}}e^{-\frac{x^2}{2}}~(x \in {\bf R})$. Let $\delta_x$ be a Dirac measure defined on the Borel $\sigma$-algebra of subsets of  ${\bf R}$ and concentrated at $x~(x \in {\bf R})$. Let $D$ be a subset of ${\bf R^N}$ defined by
$$
D=\{ (x_k)_{k \in N}: \lim_{n \to \infty} \frac{\sum_{k=1}^nx_k}{n}=0\}.
$$
It is obvious that $D$ is a Borel subset of ${\bf R^N}$.

For $(x_k)_{k \in N} \in D$ we set $\mu_{(x_k)_{k \in N}}=\prod_{k \in N}\delta_{x_k}$\footnote{Note that $\prod_{k \in N}\delta_{x_k}=\delta_{(x_k)_{k \in N}}.$ }. Let us consider the family of Borel probability measures $\{ \mu^N\} \cup \{ \mu_{(x_k)_{k \in N}} : (x_k)_{k \in N} \in D\}.$  It is obvious that  it is an orthogonal  family of Borel product-measures for which Theorem 4.14 fails. Indeed, assume the contrary and  let $\{ C\} \cup \{ C_{(x_k)_{k \in N}} : (x_k)_{k \in N} \in D\}$ be  such a partition of ${\bf R^N}$ into elements of the $\sigma$-algebra $S_0(R^N)=\cap_{(x_k)_{k \in N} \in D}\mbox{dom}(\overline{\mu}_{(x_k)_{k \in N}}) \cap \mbox{dom}(\overline{\mu^N})$  that $\overline{\mu}_{(x_k)_{k \in N}}(C_{(x_k)_{k \in N}})=1$ for $(x_k)_{k \in N} \in D$ and $\overline{\mu^N}(C)=1$.
Since $(x_k)_{k \in N} \in C_{(x_k)_{k \in N}}$ for each $(x_k)_{k \in N} \in D$ we deduce that $D \cap C=\emptyset$. This implies that $\overline{\mu^N}(C)\le \overline{\mu^N}({\bf R^N} \setminus D)=0$ because by the strong law of large numbers we have that $\overline{\mu^N}(D)=1$. The latter relation is a contradiction and Remark 4.17 is proved.
\end{rem}


\begin{thebibliography}{9}



\bibitem{CamMar44}\textit{Cameron R.H., Martin W.T.} On Transformations of Wiener integrals under translations // Ann.of Math.-1944.- \textbf{45}.- P. 386-396.


\bibitem{Feld58}\textit{Feldman J.}  Equivalence and orthogonality of Gaussian processes // Pacific J. Math. - 1958.-{\textbf 8}.- P. 699--708.



\bibitem{GKPP14} \textit{ Gill T., Kirtadze A., Pantsulaia G., Plichko A.} Existence and uniqueness of translation invariant measures in separable Banach spaces // Funct. Approx. Comment. Math.-2014.- {\textbf 50(2)}.-P. 401--419.


\bibitem{GPZ12} \textit{Gill T.L., Pantsulaia G.R., and Zachary W.W.}  Constructive Analysis In Infinitely Many Variables // Communications in Mathematical Analysis.-2012.-{\textbf 13 (1)}.-P. 107-141.


\bibitem{Gir59} \textit{Girsanov I.V., Mityasin B.S.} Quasi-invariant
 measures and linear topological spaces (in Russian)//
Nauchn. Dokl. Vys. Skol.-1959.-{\textbf 2}.-P. 5-10.


\bibitem{Gren50}\textit{Grenander Ulf.}  Stochastic processes and statistical
  inference // Ark. Mat.-1950.- {\textbf 1}.-P. 195--277.







\bibitem{Hill71}\textit{Hill D. G. B.}  $\sigma$-finite invariant measures on infinite product spaces // Trans. Amer. Math. Soc.-1971.- {\textbf 153}.- P. 347--370.


\bibitem{Kak48}\textit{Kakutani S.} On equivalence of infinite product measures // Ann. Math.-
1948.- {\textbf 4 (9)}.- P. 214-224.

\bibitem{Khar84}\textit{ Kharazishvili A.B.} On invariant measures in the Hilbert space (in Russian) // Bull. Acad. Sci.Georgian SSR.-1984.-\textbf{ 114(1)}.-P. 41--48.


\bibitem{KuNi74}\textit{ Kuipers L.,  Niederreiter H.}  Uniform distribution of sequences.-New York etc.:
John Wiley \& Sons, 1974.



\bibitem{Lep72}\textit{LePage R.D., Mandrekar V.}  Equivalence-singularity dichotomies from zero-one laws // Proc. Amer. Math. Soc.-1972.- {\textbf 31}.-P. 251--254.


\bibitem{Moore65}\textit{ Moore C.C.}  Invariant measures on product spaces // Proc. Fifth Berkeley Sympos. Math. Statist. and Probability.-1965-1966.- Vol. II: Contributions to Probability Theory, Part 2.- P. 447--459



\bibitem{Pan02}\textit{Pantsulaia G.} Duality of measure and category in infinite-dimensional separable Hilbert space $l\sb 2$ //
Int. J. Math. Math. Sci.-2002.-\textbf{30(6)}.-P. 353-363.
\MR{1904675}


\bibitem{Shim75}\textit{Shimomura, Hiroaki.} An aspect of quasi-invariant measures on $R\sp{\infty }$ // Publ. Res. Inst. Math. Sci.-1975/76.-
\textbf{11(3)}.- P.  749--773.


\bibitem{Skor74}\textit{Skorokhod A.V.} Integration in Hilbert space (in Russian).- Moscow, 1974.
English transl.: Springer, 1975.





\bibitem{Sud59} \textit{Sudakov V.N.}Linear sets with quasi-invariant measure(in Russian)// Dokl. Akad. Nauk SSSR.-1959.- \textbf{127}.-P. 524-525.



\bibitem{Ver66}\textit{Ver\v sik A. M.,} Duality in the theory of measure
 in linear spaces (in Russian) //Dokl. Akad. Nauk SSSR.-1966.-\textbf{ 170}.-P. 497--500.


\bibitem{Xia72}\textit{ Xia D.X.} Measure and integration on infinite-dimensional spaces.-New York.: Academic
Press, 1972.


\bibitem{Yam80}\textit{Yamasaki Y.} Translationally invariant measure on the infinite-dimensional vector space // Publ. Res. Inst. Math. Sci.-1980. - \textbf{16(3)}.-P. 693--720.


\bibitem{ZerPanSaat13}\textit{ Zerakidze Z., Pantsulaia G.,  Saatashvili G.}  On the separation
problem for a family of Borel and Baire $G$-powers of
shift-measures on $\mathbb{R}$ // Ukrainian Math. J. -2013.- {\textbf 65 (4)}.- P. 470--485.

\end{thebibliography}
\end{document}